\tikzstyle{vertex}=[circle,black, fill=black, draw, inner sep=0pt, minimum size=6pt]
\newcommand{\vertex}{\node[vertex]}
\definecolor{cof}{RGB}{219,144,71}
\definecolor{pur}{RGB}{186,146,162}
\definecolor{greeo}{RGB}{91,173,69}
\definecolor{greet}{RGB}{52,111,72}
\newcommand{\R}{\mathbb{R}}
\newtheorem{thm}{Theorem}
\newtheorem{prop}[thm]{Proposition}
\theoremstyle{definition}
\theoremstyle{remark}
\DeclareMathOperator{\vol}{vol}
\begin{document}

%\author{Beauttie Kuture \\ Oscar Leong \\ Christopher Loa \\ Mutiara Sondjaja \\ Francis Edward Su}
\author{Beauttie Kuture}
\address[Kuture]{
Department of Mathematics, Pomona College, Claremont, CA 91711, USA}
\email{bak02013@mymail.pomona.edu}
\author{Oscar Leong}
\address[Leong]{
Department of Mathematics, Swarthmore College, Swarthmore, PA 19081, USA}
\email{oleong1@swarthmore.edu}
\author{Christopher Loa}
\address[Loa]{
Department of Mathematics, University of Tennessee, Knoxville, TN 37916, USA}
\email{cloa@vols.utk.edu}
\author{Mutiara Sondjaja}
\address[Sondjaja]{
Department of Mathematics, New York University, New York, NY 10012, USA}
\email{sondjaja@cims.nyu.edu}
\author{Francis Edward Su}
\address[Su]{Department of Mathematics, Harvey Mudd College, Claremont, CA 91711, USA}
\email{su@math.hmc.edu}

\thanks{Su is the corresponding author. Email: su@math.hmc.edu}
\thanks{This work was 
%conducted at the Mathematical Sciences Research Institute and 
conducted during the 2015 Mathematical Sciences
Research Institute Undergraduate Program (MSRI-UP),
supported by grants
from the National Science Foundation (DMS 1156499)
and the National Security Agency (H98230-15-1-0039).
We also thank Duane Cooper for his encouragement.}
\title{Proving Tucker's Lemma With a Volume Argument}

\begin{abstract}
Sperner's lemma is a statement about labeled triangulations of a simplex.  McLennan and Tourky (2007) provided a novel proof of Sperner's Lemma by examining volumes of simplices in a triangulation under time-linear simplex-linear deformation. We adapt a similar argument to prove Tucker's Lemma on a triangulated cross-polytope $P$. The McLennan-Tourky technique does not directly apply because this deformation may distort the volume of $P$.  We remedy this by inscribing $P$ in its dual polytope, triangulating it, and considering how the volumes of deformed simplices behave.
\end{abstract}
\maketitle

\section{Introduction}

%\subsection{Background}
Sperner's Lemma is a combinatorial result that is equivalent to the Brouwer fixed point theorem and has many useful applications in mathematics and economics, including Nash's proof of his famous equilibrium theorem \cite{nash}. Consider an $d$-dimensional simplex $S$ with a \emph{triangulation}, i.e., a subdivision into smaller simplices that meet face-to-face or not at all. Sperner's Lemma states that if such a triangulation has a {\em Sperner labeling} (each vertex is labeled by one of the vertices of $S$ that span the minimal face that the vertex is on), then there exists an odd number of \emph{fully-labeled} simplices (ones whose vertices have distinct labels). 

Recently, McLennan and Tourky \cite{mclennan-tourky} gave a novel proof of Sperner's Lemma based on the following facts:

\begin{enumerate}
	\item A triangulation remains a triangulation under a small perturbation that keeps each vertex in its minimal face.
    \item The (signed) volume of a simplex spanned by $v_0, v_1, ..., v_d$ is $1/d!$ times the determinant of a matrix whose columns are $v_i-v_0$ for $i=1,..., d$.
    \item A polynomial function of $t$ is constant if it is constant on a nonempty open $t$-interval.
\end{enumerate}

Given a Sperner-labeled triangulation of a simplex, they construct a continuous deformation that moves each vertex of the triangulation linearly in time to the vertex of $S$ that it is named after.
By Fact (1), for small $t$ the sum of deformed volumes of all simplices is equal to the total volume of $S$, so the sum is constant for small $t$. By Fact (2), this volume sum is also a polynomial in $t$ because the volume of each deformed simplex is the determinant of a matrix whose entries are linear in $t$.  By Fact (3) the volume sum remains constant for all $t$, hence at $t=1$ it is also non-zero and represents the sum of volumes of all deformed simplices. Thus some deformed simplex has non-zero volume, which means it was originally a fully-labeled simplex (because non-fully-labeled simplices become degenerate under the deformation).
In fact, there must be one more fully-labeled simplex of positive volume than of negative volume, which implies that the number of fully-labeled simplices is odd. 

While there are many proofs of Sperner's lemma (e.g., see \cite{su}), the novelty of the McLennan-Tourky approach is that it uses a geometric volume argument rather than an inductive combinatorial argument to find the desired fully-labeled simplex. In this paper, we show that this approach can be used to prove another classical lemma from topological combinatorics.

Like Sperner's lemma, Tucker's Lemma \cite{lefschetz, tucker} is a combinatorial analog of a topological theorem, in this case, the Borsuk-Ulam theorem.  It also has beautiful connections to other well-known results in mathematics and economics, such as the ham sandwich theorem, necklace-splitting problems \cite{meunier}, and fair division problems \cite{simmons-su}.

Let $e_i$ denote the standard unit basis vector in $\R^d$ (with $1$ in the $i$-th coordinate and $0$ in every other coordinate) and define $e_{-i} = -e_i$.  Let $P$ denote the $d$-dimensional \emph{cross-polytope}, the convex hull of the $2n$ vectors $\{ e_i, e_{-i}\}_{i=1}^d$.  

\begin{thm}[Tucker's Lemma \cite{tucker}]
\label{mainthm}
Let $T_P$ be a triangulation of the $d$-dimensional cross-polytope $P$ whose restriction to the boundary $\partial P$ is antipodally symmetric: if $\sigma$ is a simplex in $\partial P$, then $-\sigma$ also is a simplex. 
Suppose that $T_P$ has a \emph{Tucker labeling}: each vertex $v$ of $T_P$ is assigned a label $\ell(v)$ in $\{\pm 1, \pm 2,... \pm d\}$ such that antipodal vertices in $\partial P$ have labels summing to zero. Then there is an \emph{complementary edge}: a pair of adjacent vertices in $T_P$ have labels summing to zero.
\end{thm}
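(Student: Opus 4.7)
The plan is to run the McLennan--Tourky deformation argument inside the polar dual hypercube $Q = [-1,1]^d$ rather than directly inside $P$. Unlike a Sperner labeling, a Tucker labeling does not tie a vertex $v$ to the minimal face of $P$ containing it, so the natural deformation $v_t := (1-t)v + t\,e_{\ell(v)}$ pushes $\partial P$ inward and the sum of deformed $T_P$-simplex volumes varies with $t$. Inscribing $P$ in $Q$ (each $\pm e_i$ sits at the center of a facet of $Q$) gives enough room that every target $e_{\ell(v)}$ still lies inside $Q$, and the cube vertices $w^\epsilon := (\epsilon_1, \ldots, \epsilon_d) \in \{\pm 1\}^d$ serve as rigid anchors on $\partial Q$ that stabilize the ambient volume.

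The steps are as follows. First, extend $T_P$ to a triangulation $T_Q$ of $Q$ by triangulating each of the $2^d$ corner regions of $Q \setminus P$ using only the cube vertices $w^\epsilon$ and the points $\pm e_i$ (all of which lie on $\partial Q$). Second, assume for contradiction that $T_P$ has no complementary edge and define the deformation that sends each vertex $v$ of $T_P$ to $e_{\ell(v)}$ while keeping every vertex of $T_Q$ that lies on $\partial Q$ pinned in its minimal face of $\partial Q$. By Fact~(2), the total signed volume $V(t) := \sum_{\sigma \in T_Q} \vol(\sigma_t)$ is a polynomial in $t$; by Fact~(1) and the rigidity at the boundary of $Q$, for small $t$ the deformation is a valid perturbation and $V(t) = \vol(Q) = 2^d$. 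Fact~(3) then promotes this to the polynomial identity $V(t) \equiv 2^d$ for all $t$.

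At $t = 1$, each simplex $\sigma \in T_P$ has $d+1$ labels lying (by the no-complementary-edge assumption) in a non-complementary subset of $\{\pm 1, \ldots, \pm d\}$ of size at most $d$; pigeonhole collapses two of its vertices to the same $e_k$, forcing $\vol(\sigma_1) = 0$, so the $T_P$-contribution to $V(1)$ vanishes. The leftover contribution comes from the extension simplices in the corner regions, each of which contains at least one pinned $\partial Q$-vertex. I would show this leftover is strictly less than $2^d$ by pairing antipodal simplices, using the antipodal symmetry $\ell(-v) = -\ell(v)$ on $\partial P$ together with $\vol(-S) = (-1)^d \vol(S)$, and doing a determinantal accounting that pins down which deformed contributions cancel; the discrepancy with $V(1) = 2^d$ yields the contradiction.

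The step I expect to be hardest is this last one. An extension simplex that contains a pinned vertex of $\partial Q$ has fewer freely moving vertices, so the pigeonhole collapse no longer forces degeneracy: a base with $d$ distinct non-complementary labels deforms to a genuine facet $F^\eta$ of $P$, and a pyramid over it with a fixed apex still has positive volume. Pinning down exactly which of these surviving contributions cancel in antipodal pairs, and carefully tracking the sign $(-1)^d$ that arises for odd versus even $d$, is the substantive content of the proof. A secondary technical issue is constructing $T_Q$ cleanly in dimension $d \geq 3$, where the corner regions of $Q \setminus P$ are not themselves simplices and must be subdivided using only the vertices on $\partial Q$; this is where the passage to $Q$ becomes essential rather than cosmetic, since the cube structure is what provides the rigid $\partial Q$ anchors that make the polynomial identity $V(t) \equiv 2^d$ hold.
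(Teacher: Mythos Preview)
Your overall strategy---embed $P$ in a cube, extend the triangulation, fix the cube boundary, and run the McLennan--Tourky deformation on the cube---is exactly the one the paper uses, and your identification of the final accounting step as the crux is correct. But there is a technical slip in the setup: with $Q=[-1,1]^d$ the vertices $\pm e_i$ of $P$ lie \emph{on} $\partial Q$, and the deformation $e_i \mapsto (1-t)e_i + t\,e_{\ell(e_i)}$ immediately pulls such a vertex off its minimal face of $Q$ into the interior (whenever $\ell(e_i)\neq i$). Then for small $t>0$ the deformed complex no longer covers $Q$, and the identity $V(t)\equiv 2^d$ fails at the outset. The paper avoids this by taking the cube strictly large enough that $P$ sits in its \emph{interior}, so every vertex of $T_P$ has all of $C$ as its minimal face and may move freely. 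A related issue: the corner regions of $Q\setminus P$ share an entire facet of $P$ with $T_P$, so any compatible extension must use \emph{all} the vertices of $T_{\partial P}$ there, not merely $w^\epsilon$ and $\pm e_i$.

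The more substantive gap is the final step. You propose to show directly that the corner-simplex contribution at $t=1$ is ``strictly less than $2^d$'' via antipodal pairing and determinantal bookkeeping, but you do not say how, and in fact the contribution need not be less than $2^d$ (it can exceed it); what you actually need is that it differs from $\vol(C)$ by a nonzero multiple of $\vol(P)$. The paper's key idea---which your plan is missing---is to introduce a \emph{second} triangulation $T^*$ of $C$ that agrees with $T$ on the exterior region $E=C\setminus\mathrm{int}(P)$ but replaces $T_P$ by the cone from the origin $e_0$ over $T_{\partial P}$. Because the two triangulations deform identically on $E$, subtracting the two McLennan--Tourky identities cancels the corner contributions completely and yields $S_{T_P}(1)=S_{T^*_P}(1)$. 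The cone structure then gives $S_{T^*_P}(1)=\deg(f)\cdot\vol(P)$, where $f:T_{\partial P}\to \partial P$ is the simplicial sphere map induced by the labeling; the Tucker condition makes $f$ antipode-preserving, and one invokes the classical fact that such maps have odd degree. Your proposed ``determinantal accounting that pins down which deformed contributions cancel'' would, if it worked, amount to reproving this odd-degree theorem from scratch inside the corner region---and that theorem is the genuine topological content here, not something that drops out of a sign-pairing argument.
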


\begin{figure}[ht]
\begin{tikzpicture}[scale=1]

	\vertex (v1) at (2,0) {};
    \node[label=$+1$] at (1.9,-0.1) {};
    \vertex (v3) at (2,-4) {};
    \node[label=$-1$] at (1.9,-4.7) {};

    \vertex (v2) at (4,-2) {};
    \node[label=$+1$] at (4.4,-2.5) {};
    \vertex (v4) at (0,-2) {};
    \node[label=$-1$] at (-0.4,-2.5) {};    
    
    \vertex (v5) at (3,-1) {};
        \node[label=$+2$] at (3.3,-1.2) {};
    \vertex (v6) at (1,-3) {};
        \node[label=$-2$] at (0.6,-3.7) {};
        
    \vertex (v7) at (1,-1) {};
        \node[label=$-2$] at (0.7,-1.1) {};
    \vertex (v8) at (3,-3) {};
        \node[label=$+2$] at (3.3,-3.7) {};

    \vertex (v11) at (0.5,-1.5) {};
        \node[label=$-2$] at (0.1,-1.7) {};
    \vertex (v12) at (3.5,-2.5) {};
        \node[label=$+2$] at (3.8,-3.2) {};
    
    \vertex (v13) at (1.7,-2.3) {};
    \node[label=$-2$] at (2,-3) {};
    \vertex (v14) at (2.9,-2) {};
    \node[label=$+2$] at (3.3,-2.2) {};
    \vertex (v15) at (1.9,-1.5) {};            
    \node[label=$+1$] at (2.1,-1.7) {};

	 \tikzset{EdgeStyle/.style={-}}
	\Edge(v1)(v2)
    \Edge(v2)(v3)
    \Edge(v3)(v4)
    \Edge(v4)(v1)
    
    \Edge(v1)(v14)
    \Edge(v2)(v14)
    \Edge(v5)(v14)
    \Edge(v7)(v13)
    \Edge(v13)(v15)
    
    \Edge(v6)(v13)
    \Edge(v4)(v13)
    \Edge(v3)(v13)
    \Edge(v3)(v14)
    \Edge(v11)(v13)
    \Edge(v7)(v15)
    \Edge(v15)(v14)
    \Edge(v14)(v12)
    \Edge(v12)(v14)
    \Edge(v8)(v14)
    \Edge(v14)(v13)
    \Edge(v15)(v1)
    
\end{tikzpicture}
\caption{A Tucker labeling of the cross-polytope $P$ must have a complementary edge.}
\label{figure:tucker}
\end{figure}
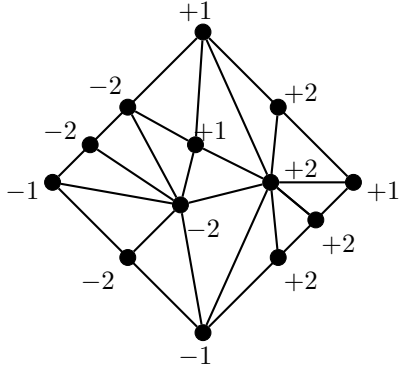

Although the cross-polytope $P$ in Tucker's lemma is convenient for our purposes, the result holds if $P$ is any centrally symmetric object topologically equivalent to a ball. Tucker \cite{tucker} established his lemma for a triangulated 2-dimensional ball, and Lefschetz \cite{lefschetz} gave a general proof for a $d$-dimensional ball. 
Baker \cite{baker} developed a cubical version.
Freund-Todd \cite{freund-todd} gave the first constructive proof of Tucker's lemma by using a path-following argument on a cross-polytope. Prescott-Su \cite{prescott-su} obtained a different constructive proof on a triangulated ball that proves also the more general Fan's lemma.

At first glance, it may seem that the McLennan-Tourky approach can be directly applied to prove Tucker's Lemma---just deform the Tucker-labelled triangulation by moving every vertex to the extreme point with the same label.  However, such a deformation, even for small time intervals, is not guaranteed to cover $P$, so the sum of the volumes of the deformed simplices will not necessarily remain constant.  This is the new challenge that didn't exist with Sperner's lemma.

We avoid this problem by placing the cross-polytope $P$ inside a larger polytope $C$ and applying the McLennan-Tourky technique to $C$. The deformation we construct will continue to cover $C$ for an open $t$-interval, so the total volume of the simplices remains constant and equal to the total volume of $C$ even after the deformed simplices no longer form a triangulation.  We observe what happens inside $P$.
A volume argument will show that if the boundary of $P$ has no complementary edges, then the interior of $P$ must be covered by deformed simplices of $P$ at time $t=1$.  The only way $P$ can be covered by simplices is if one of them had two labels that summed to zero.  This would yield the desired complementary edge.  The goal of this paper is to demonstrate how to make this intuition precise. 

\section{Volume Sums}

Given a triangulation $T$ of a $d$-dimensional polytope $K$ and a time interval $[a,b]$, $a\neq b$, suppose $\Delta_T:[a,b] \times K \rightarrow K$ is a deformation
that is: 
\begin{enumerate}
\item \emph{time-linear}: $\Delta_T(t,x)$ is linear in $t$, with $\Delta_T(0,x)=x$, and
\item \emph{simplex-linear}: $\Delta_T(t,x)$ is linear in $x$ when restricted to a simplex in $T$.
\end{enumerate}
We use $\Delta_T(t,\sigma)$ to denote the image of a simplex $\sigma$ at a particular time $t$.  
For a deformation $\Delta_T$, we can define the volume sum
$$
S_{T}(t) = \sum_{\sigma \in T} \vol (\Delta_T(t,\sigma))
$$
which is the sum of the $d$-dimensional volumes of deformed simplices as a function of time. Throughout the paper the term \emph{volume} will mean \emph{signed volume} that takes orientation into account, so that at time $t=0$ all simplex volumes are non-negative.
For notational simplicity we write 
the sum over all simplices of $T$, though the only non-zero terms in the sum will come from $d$-dimensional simplices of $T$.  This volume sum can be defined similarly for any subtriangulation of $T$.

McLennan and Tourky's proof of Sperner's lemma rests on what we will call the McLennan-Tourky observation about such deformations.
\begin{thm}[McLennan-Tourky observation]
Given a triangulation $T$ of a polytope $K$ and a time-linear simplex-linear deformation $\Delta_T:[a,b]\times K \rightarrow K$, if the deformation keeps each vertex of $T$ in its minimal face in $K$, then the volume sum $S_{T}(t)$ is constant and equal to $\vol(K)$ for all $t \in [a,b]$.
\end{thm}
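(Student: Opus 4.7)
The plan is to combine the three facts enumerated in the introduction in the natural way: show that $S_T(t)$ is a polynomial in $t$, show that it equals $\vol(K)$ on an open subinterval, and conclude by rigidity of polynomials.

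First I would verify that $S_T(t)$ is a polynomial in $t$. Fix a $d$-simplex $\sigma \in T$ with vertices $v_0, v_1, \dots, v_d$. By time-linearity, each deformed vertex $\Delta_T(t, v_i)$ is an affine function of $t$ with coordinates in $\R^d$, and by simplex-linearity the image $\Delta_T(t, \sigma)$ is the convex hull of these deformed vertices. By Fact (2), the signed volume of $\Delta_T(t,\sigma)$ is $\frac{1}{d!}$ times the determinant of the $d \times d$ matrix whose $i$-th column is $\Delta_T(t, v_i) - \Delta_T(t, v_0)$. Each entry of this matrix is affine in $t$, so the determinant is a polynomial in $t$ of degree at most $d$. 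Summing over $\sigma \in T$, we see $S_T(t)$ is a polynomial in $t$.

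Next I would argue that $S_T(t) = \vol(K)$ on an open interval containing $0$. At $t=0$ the deformation is the identity, so $\Delta_T(0, \sigma) = \sigma$ and $S_T(0) = \sum_{\sigma \in T} \vol(\sigma) = \vol(K)$. Since $\Delta_T$ is continuous and keeps each vertex in its minimal face of $K$, for $t$ sufficiently small the deformed vertices constitute an arbitrarily small perturbation of the original ones that still respects the face structure. By Fact (1), for all such $t$ the deformed simplices $\{\Delta_T(t,\sigma)\}_{\sigma \in T}$ form a triangulation of $K$, so $S_T(t) = \vol(K)$ on an open interval $(-\varepsilon, \varepsilon) \cap [a,b]$.

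Finally I would apply Fact (3): a polynomial in $t$ that equals the constant $\vol(K)$ on a nonempty open subinterval of $[a,b]$ must equal $\vol(K)$ on all of $[a,b]$. This gives the conclusion.

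The main obstacle, and the only spot requiring care, is the application of Fact (1): one must know that keeping each vertex in its minimal face of $K$ is enough to preserve the triangulation under a sufficiently small perturbation. This is an input to the proof rather than something to be proved here, but the geometric content of the theorem rests entirely on it, since without the face-preservation hypothesis the perturbed simplices could spill outside $K$ or overlap, destroying the equality $S_T(t) = \vol(K)$ near $t=0$.
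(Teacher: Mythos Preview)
Your proof is correct and follows exactly the approach the paper indicates: the paper simply states that the observation ``follows immediately from Facts (1), (2), and (3)'' and cites \cite{mclennan-tourky}, and you have spelled out precisely how those three facts combine. Your identification of Fact (1) as the one step requiring geometric input is accurate and matches the paper's treatment of it as a given rather than something to be proved here.
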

The observation follows immediately from Facts (1), (2), and (3) above and is proved in \cite{mclennan-tourky}.

\section{Tucker's Lemma}

As in the hypothesis of Tucker's lemma, let $P$ be a $d$-dimensional cross-polytope and $\partial P$ its boundary, endowed with: 
\begin{enumerate}
\item a triangulation $T_P$ that is symmetric on $\partial P$, and 
\item a labeling $\ell: V(T) \rightarrow \{\pm 1,...,\pm d\}$ that is anti-symmetric on $\partial P$: antipodal vertex labels sum to zero, i.e., $\ell(-v)=\ell(v)$ for each vertex $v \in \partial P \cap V(T)$.
\end{enumerate}

The $d$-dimensional cross-polytope $P$ has $2d$ extreme points that come in complementary pairs: $e_{+i}, e_{-i}$ for $i=1,...,d$.  It also has $2^d$ facets, each spanned by $d$ extreme points formed by choosing exactly one of each complementary pair.

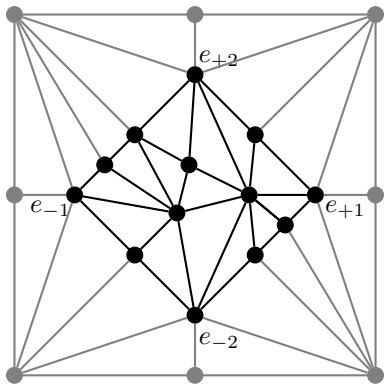
\begin{figure}[ht]
\begin{tikzpicture}[scale=0.8]
	\vertex (v1) at (2,0) {};
    \node[label=$e_{+2}$] at (2.4,-0.2) {};
    \vertex (v2) at (4,-2) {};
    \node[label=$e_{+1}$] at (4.5,-2.7) {};
    \vertex (v3) at (2,-4) {};
    \node[label=$e_{-2}$] at (2.4,-4.9) {};
    \vertex (v4) at (0,-2) {};
    \node[label=$e_{-1}$] at (-0.4,-2.7) {};    
   
    \vertex (v5) at (3,-1) {};
    \vertex (v6) at (1,-3) {};
    \vertex (v7) at (1,-1) {};
    \vertex (v8) at (3,-3) {};
    %v9 gone
    %v10 gone
    \vertex (v11) at (0.5,-1.5) {};
    \vertex (v12) at (3.5,-2.5) {};
    
    \vertex (v13) at (1.7,-2.3) {};
    \vertex (v14) at (2.9,-2) {};
    \vertex (v15) at (1.9,-1.5) {};
    
    \vertex[gray] (d1) at (-1,1) {};
    \vertex[gray] (d2) at (5,1) {};
    \vertex[gray] (d3) at (5,-5) {};
    \vertex[gray] (d4) at (-1,-5) {};
    
    \vertex[gray] (m1) at (2,1) {};
%    \node[label=$m_{+2}$] at (2,1.5) {};
%    \node[label=$+2$] at (2,1) {};
    \vertex[gray] (m2) at (5,-2) {};
%    \node[label=$-1$] at (5.5,-2.3) {};
%    \node[label=$m_{-1}$] at (5.5,-2.8) {};
    \vertex[gray] (m3) at (2,-5) {};
%    \node[label=$-2$] at (2,-6) {};
%    \node[label=$m_{-2}$] at (2,-6.5) {};
    \vertex[gray] (m4) at (-1,-2) {};
%    \node[label=$+1$] at (-1.5,-2.3) {};
%    \node[label=$m_{+1}$] at (-1.5,-2.8) {};
    
	 \tikzset{EdgeStyle/.style={-}}
	\Edge(v1)(v2)
    \Edge(v2)(v3)
    \Edge(v3)(v4)
    \Edge(v4)(v1)
    
    \Edge(v1)(v14)
    \Edge(v2)(v14)
    \Edge(v5)(v14)
    \Edge(v7)(v13)
    \Edge(v13)(v15)
    
    \Edge(v6)(v13)
    \Edge(v4)(v13)
    \Edge(v3)(v13)
    \Edge(v3)(v14)
    \Edge(v11)(v13)
    \Edge(v7)(v15)
    \Edge(v15)(v14)
    \Edge(v14)(v12)
    \Edge(v12)(v14)
    \Edge(v8)(v14)
    \Edge(v14)(v13)
    \Edge(v15)(v1)
    
\tikzset{EdgeStyle/.style={-, gray}}
    
    \Edge(d1)(d2)
    \Edge(d2)(d3)
    \Edge(d3)(d4)
    \Edge(d4)(d1)
    \Edge(d2)(v1)
    \Edge(d2)(v5)
    \Edge(d2)(v2)
    \Edge(d3)(v2)
    \Edge(d3)(v12)
    \Edge(d3)(v8)
    \Edge(d3)(v3)
    \Edge(d4)(v3)
    \Edge(d4)(v6)
    \Edge(d4)(v4)
    \Edge(d1)(v4)
    \Edge(d1)(v11)
    \Edge(d1)(v7)
    \Edge(d1)(v1)

    \Edge(m1)(v1)
    \Edge(m2)(v2)
    \Edge(m3)(v3)
    \Edge(m4)(v4);
%    \draw [decorate,decoration={brace,amplitude=10pt,raise=4pt},xshift=1pt,yshift=0pt]
%(5.5,1) -- (5.5,-5) node [black,midway,xshift=0.8cm] {
%$2$};
\end{tikzpicture}

\caption{A triangulation $T_P$ of a diamond ($2$-dimensional cross-polytope) $P$ 
%(same as in Figure \ref{figure:tucker}) 
extended to a triangulation $T$ of a square $C$.}
\label{figure:generalcase}
\end{figure}

Embed $P$ in the \emph{interior} of some polytope, which we may as well take to be the dual polytope of $P$, a cube $C$ large enough to enclose $P$.  Figure \ref{figure:generalcase} shows the situation for dimension $2$, and Figure \ref{figure:higherdimensions} shows the situation for higher dimensions.
Extend the triangulation $T_P$ of $P$ to a triangulation $T$ of $C$ in any convenient way.

\begin{figure}[ht]
\begin{tikzpicture}[thick,scale=3]

\coordinate (O) at (0,-0.5,-0.5);
\coordinate (A) at (0,0.5,-0.5);
\coordinate (B) at (0,0.5,0.5);
\coordinate (C) at (0,-0.5,0.5);
\coordinate (D) at (1,-0.5,-0.5);
\coordinate (E) at (1,0.5,-0.5);
\coordinate (F) at (1,0.5,0.5);
\coordinate (G) at (1,-0.5,0.5);

\draw[blue,fill=yellow!80] (O) -- (C) -- (G) -- (D) -- cycle;% Bottom Face
\draw[blue,fill=blue!30] (O) -- (A) -- (E) -- (D) -- cycle;% Back Face
\draw[blue,fill=red!10] (O) -- (A) -- (B) -- (C) -- cycle;% Left Face
\draw[blue,fill=red!20,opacity=0.8] (D) -- (E) -- (F) -- (G) -- cycle;% Right Face
\draw[blue,fill=red!20,opacity=0.6] (C) -- (B) -- (F) -- (G) -- cycle;% Front Face
\draw[blue,fill=red!20,opacity=0.8] (A) -- (B) -- (F) -- (E) -- cycle;% Top Face

%% Following is for debugging purposes so you can see where the points are
%% These are last so that they show up on top
%\foreach \xy in {O, A, B, C, D, E, F, G}{
%    \node at (\xy) {\xy};
%}

\coordinate (A1) at (0,0);
\coordinate (A2) at (0.6,0.2);
\coordinate (A3) at (1,0);
\coordinate (A4) at (0.4,-0.2);
\coordinate (B1) at (0.5,0.5);
\coordinate (B2) at (0.5,-0.5);
%    \node[label=$P$] at (0.3,-0.1,0) {};
%    \node[label=$C$] at (0,0.05,0) {};

\begin{scope}[thick,dashed,,opacity=0.6]
\draw (A1) -- (A2) -- (A3);
\draw (B1) -- (A2) -- (B2);
\end{scope}
\draw[fill=cof,opacity=0.6] (A1) -- (A4) -- (B1);
\draw[fill=pur,opacity=0.6] (A1) -- (A4) -- (B2);
\draw[fill=greeo,opacity=0.6] (A3) -- (A4) -- (B1);
\draw[fill=greet,opacity=0.6] (A3) -- (A4) -- (B2);
\draw (B1) -- (A1) -- (B2) -- (A3) --cycle;
\end{tikzpicture}
\caption{Embedding a cross-polytope $P$ in a cube $C$.}
\label{figure:higherdimensions}
\end{figure}
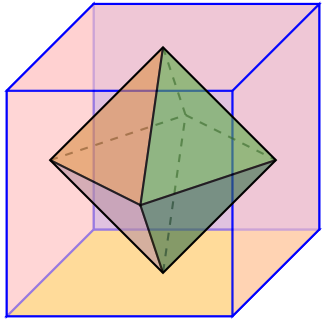

%Let $T$ be a triangulation of $C$ and let $T|_P$ denote its restriction to $P$, etc.
Consider a deformation $\Delta_T:[0,1]\times C \rightarrow C$ defined on vertices of the triangulation $T$ as follows:
$$
\Delta_T(t,v) = 
\begin{cases}
  (1-t)v + t e_{\ell(v)} & \text{ if } v \in V(T_P) \\
  v &  \text{ if $v \notin V(T_P)$ }
\end{cases}
$$
and define $\Delta_T(t,x)$ for any $x \in C$ by linear extension. Then $\Delta_T$ is both time-linear and simplex-linear, and it fixes the boundary of $C$.  At $t=0$, it is easy to see that $\Delta_T$ is the identity on all of $C$.  If there are no complementary edges in $T_P$, then as $t$ runs from $0$ to $1$, the simplices of $T_P$ collapse onto whole facets of 
$\partial P$, since vertices in $T_P$ map to the extreme points of $P$.  The simplices outside $P$ may deform but the boundary of $C$ is fixed.  So the McLennan-Tourky observation applies; the volume sum $S_{T}(t)$ remains constant over $[0,1]$ and equal to $\vol(C)$.

Now we consider a slightly different labeled triangulation of $P$, which we denote by $T^*_P$: it has exactly the same labeled triangulation structure on $\partial P$ as $T$ does, but only one vertex $e_0$ in the interior of $P$, placed at the origin and labeled $0$.  All maximal simplices of $T^*_P$ are cones from $e_0$ to the simplices in $\partial P$.  See Figure \ref{figure:cones}.

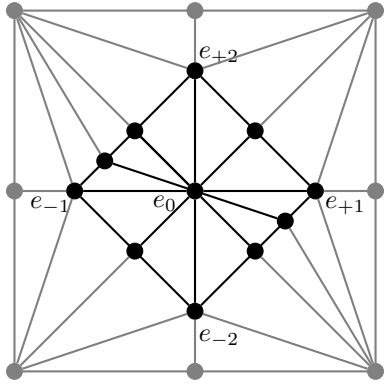
\begin{figure}[ht]
\begin{tikzpicture}[scale=0.8]
	\vertex (v1) at (2,0) {};
    \vertex (v2) at (4,-2) {};
    \vertex (v3) at (2,-4) {};
    \vertex (v4) at (0,-2) {};

    \node[label=$e_{+2}$] at (2.4,-0.2) {};
    \node[label=$e_{+1}$] at (4.5,-2.7) {};
    \node[label=$e_{-2}$] at (2.4,-4.9) {};
    \node[label=$e_{-1}$] at (-0.4,-2.7) {};    
    \node[label=$e_{0}$] at (1.5,-2.65) {};    

    \vertex (v5) at (3,-1) {};
    \vertex (v6) at (1,-3) {};
    \vertex (v7) at (1,-1) {};
    \vertex (v8) at (3,-3) {};
    %v9 gone
    %v10 gone
    \vertex (v11) at (0.5,-1.5) {};
    \vertex (v12) at (3.5,-2.5) {};
    
    \vertex (v13) at (2,-2) {};
    
    \vertex[gray] (d1) at (-1,1) {};
    \vertex[gray] (d2) at (5,1) {};
    \vertex[gray] (d3) at (5,-5) {};
    \vertex[gray] (d4) at (-1,-5) {};
    
    \vertex[gray] (m1) at (2,1) {};
%    \node[label=$m_{+2}$] at (2,1.5) {};
%    \node[label=$+2$] at (2,1) {};
    \vertex[gray] (m2) at (5,-2) {};
%    \node[label=$-1$] at (5.5,-2.3) {};
%    \node[label=$m_{-1}$] at (5.5,-2.8) {};
    \vertex[gray] (m3) at (2,-5) {};
%    \node[label=$-2$] at (2,-6) {};
%    \node[label=$m_{-2}$] at (2,-6.5) {};
    \vertex[gray] (m4) at (-1,-2) {};
%    \node[label=$+1$] at (-1.5,-2.3) {};
%    \node[label=$m_{+1}$] at (-1.5,-2.8) {};
    
	 \tikzset{EdgeStyle/.style={-}}
	\Edge(v1)(v2)
    \Edge(v2)(v3)
    \Edge(v3)(v4)
    \Edge(v4)(v1)
    
    \Edge(v1)(v13)
    \Edge(v2)(v13)
    \Edge(v5)(v13)
    \Edge(v7)(v13)
    
    \Edge(v6)(v13)
    \Edge(v4)(v13)
    \Edge(v3)(v13)
    \Edge(v11)(v13)
    \Edge(v12)(v13)
    \Edge(v7)(v13)
    \Edge(v8)(v13)
    
\tikzset{EdgeStyle/.style={-, gray}}
    
    \Edge(d1)(d2)
    \Edge(d2)(d3)
    \Edge(d3)(d4)
    \Edge(d4)(d1)
    \Edge(d2)(v1)
    \Edge(d2)(v5)
    \Edge(d2)(v2)
    \Edge(d3)(v2)
    \Edge(d3)(v12)
    \Edge(d3)(v8)
    \Edge(d3)(v3)
    \Edge(d4)(v3)
    \Edge(d4)(v6)
    \Edge(d4)(v4)
    \Edge(d1)(v4)
    \Edge(d1)(v11)
    \Edge(d1)(v7)
    \Edge(d1)(v1)

    \Edge(m1)(v1)
    \Edge(m2)(v2)
    \Edge(m3)(v3)
    \Edge(m4)(v4);
%    \draw [decorate,decoration={brace,amplitude=10pt,raise=4pt},xshift=1pt,yshift=0pt]
%(5.5,1) -- (5.5,-5) node [black,midway,xshift=0.8cm] {
%$2$};
\end{tikzpicture}
\caption{A triangulation $T^*_P$ of $P$, extended to a triangulation $T^*$ of $C$.}
\label{figure:cones}
\end{figure}

Extend $T^*_P$ to a triangulation $T^*$ of $C$ in the exact same way as we extended $T_P$ to $T$ so that on $E=C \setminus \mbox{int}(P)$, the triangulation $T^*$ is identical to $T$.  For convenience later, we denote the restrictions of $T$ and $T^*$ to $E$ by $T_E$ and $T^*_E$, respectively, and note that $T_E=T^*_E$.
The deformation $\Delta_{T^*}:[0,1]\times C \rightarrow C$ can be defined as before:
$$
\Delta_{T^*}(t,v) = 
\begin{cases}
  (1-t)v + t e_{\ell(v)} & \text{ if } v \in V(T^*_P) \\
  v &  \text{ if $v \notin V(T^*_P)$ }
\end{cases}
$$
on vertices of ${T^*}$, then we can extend the definition linearly over simplices.  Then $\Delta_{T^*}$ is both time-linear and simplex-linear and fixes the boundary of $C$.  At $t=0$, $\Delta_{T^*}$ is the identity on $C$.  And as $t$ runs from $0$ to $1$, the deformation $\Delta_{T^*}$ will keep $e_0$ fixed and move vertices in $\partial P$ to one of the extreme points of $P$.  The simplices outside $P$ deform but the boundary of $C$ is fixed.  Thus the McLennan-Tourky observation applies and the volume sum $S_{T^*}(t)$ remains constant over $[0,1]$ and equal to $\vol(C)$.

Now compare these two deformations $\Delta_T$ and $\Delta_{T^*}$.  In both cases, their volume sums over $C$ are equal, having remained constant at $\vol(C)$ as time runs from $0$ to $1$.  So $S_T(t)=S_{T^*}(t)=\vol(C)$.  Breaking up the volume sums into sums over triangulations in $E$ and $P$ we have:
$$
S_{T_P}(t) + S_{T_E}(t) = S_{T^*_P}(t) + S_{T^*_E}(t).
$$
In both cases the volume sums over $E$ may not remain constant, but because $T$ and $T^*$ are deforming in exactly the same way, i.e., $\Delta_{T} = \Delta_{T^*}$ on $E$, we see that their volume sums stay equal: $S_{T_E}(t)=S_{T^*_E}(t)$.
Hence, the volumes sums over $P$ must remain equal, and in particular, at time $t=1$:
$$
S_{T_P}(1)=S_{T^*_P}(1).
$$

The following proposition shows the conclusion of Tucker's lemma holds if we show $S_{T_P}(1) \neq 0$.

\begin{prop}
\label{prop:nonzerovolume}
If $S_{T_P}(1) \neq 0$, then $T_P$ contains a complementary edge.
\end{prop}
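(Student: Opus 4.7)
My plan is to prove the contrapositive: if $T_P$ contains no complementary edge, then $S_{T_P}(1) = 0$. This reduces the claim to a pigeonhole argument on the labels of each $d$-dimensional simplex, combined with the simple observation that a simplex with two coincident vertices has zero volume.

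Concretely, fix any $d$-dimensional simplex $\sigma \in T_P$ with vertices $v_0, v_1, \ldots, v_d$, and consider the label set $L(\sigma) = \{\ell(v_0), \ell(v_1), \ldots, \ell(v_d)\} \subseteq \{\pm 1, \pm 2, \ldots, \pm d\}$. Since $\sigma$ contains no complementary edge, no two of its vertex labels sum to zero, so for each index $i \in \{1, \ldots, d\}$ at most one of $+i, -i$ appears in $L(\sigma)$. Hence $L(\sigma)$ contains at most $d$ distinct values, but $\sigma$ has $d+1$ vertices. By the pigeonhole principle, there exist $j \neq k$ with $\ell(v_j) = \ell(v_k)$.

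Now I would apply the definition of $\Delta_T$ at $t=1$: each vertex $v$ of $T_P$ is sent to $e_{\ell(v)}$. In particular, $\Delta_T(1, v_j) = e_{\ell(v_j)} = e_{\ell(v_k)} = \Delta_T(1, v_k)$, so the deformed simplex $\Delta_T(1,\sigma)$ is a simplex with two coincident vertices, and consequently $\vol(\Delta_T(1,\sigma)) = 0$ (this is immediate from Fact (2), since the determinant defining the signed volume has two equal columns or a zero column, depending on which of $j,k$ equals $0$). Summing over all $d$-simplices of $T_P$ yields $S_{T_P}(1) = 0$, completing the contrapositive.

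I do not expect any serious obstacle here: the only thing to be careful about is stating the pigeonhole cleanly on the multiset of labels (not just the set, since some vertices may repeat the label which is precisely what forces degeneracy), and checking that degenerate lower-dimensional simplices also contribute $0$ to the sum, which is built into the convention that $S_{T_P}$ collects signed $d$-dimensional volumes.
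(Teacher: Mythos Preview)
Your argument is correct and is essentially the contrapositive of the paper's own proof: the paper argues directly that a nonzero term in $S_{T_P}(1)$ forces $d+1$ distinct labels on some simplex, hence a complementary pair by pigeonhole, while you run the same pigeonhole in reverse to show every simplex degenerates at $t=1$ if no complementary edge exists. The content is identical.
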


\begin{proof}
If the volume sum $S_{T_P}(1) = \sum_{\sigma \in P} \vol(\Delta_{T_P}(1, \sigma))$ is non-zero, then at least one of the terms is non-zero: there must exist a simplex $\sigma$ whose deformed volume at time $1$ is non-zero. This can only happen if the vertices of $\sigma$ have $d+1$ distinct labels. These are chosen from $d$ complementary pairs of labels, so $\sigma$ must have a complementary edge.
\end{proof}

So it will suffice to show $S_{T^*_P}(1) \neq 0$.  We consider $T^*_P$ instead of $T_P$ for reasons that will be clear shortly.

We will appeal to the use of the \emph{degree} of a simplicial map of spheres.  It is a standard fact that every simplicial map $f$ of a $d$-dimensional sphere with triangulation $K$ to a $d$-dimensional sphere with triangulation $L$ has a well-defined integer associated with it, called the \emph{degree} of $f$.  We review that briefly here.  
For any $d$-dimensional simplex $\sigma$ in $L$, let $p(\sigma)$ be the number of pre-images of $\sigma$ under $f$ that map to $\sigma$ with positive orientation (the outside of the sphere maps to the outside of the sphere), and let $n(\sigma)$ be the number of pre-images of $\sigma$ under $f$ that map to $\sigma$ with negative orientation (the outside of the sphere maps to the inside of the sphere).  Then the quantity $p(\sigma)-n(\sigma)$ is called the \emph{degree} of $f$ because it is independent of $\sigma$.  This can be seen by noting that if $\sigma$ and $\tau$ are adjacent $d$-simplices in $L$ sharing a common $(d-1)$-face $\gamma$, then their pre-images of $\sigma$ and $\tau$ in $K$ are matched by paths of $d$-dimensional or $(d-1)$-dimensional simplices in $K$ that are pre-images of $\gamma$.  Pre-images of $\sigma$ that are matched to each other will map to $\sigma$ with opposite orientation so they contribute nothing to the sum $p(\sigma)-n(\sigma)$.  Similarly, pre-images of $\tau$ that are matched to each other will map to $\tau$ with opposite orientation and contribute nothing to the sum $p(\tau)-n(\tau)$.  What remains are pre-images of $\sigma$ that are mapped to pre-images of $\tau$ with the same orientation.  So $p(\sigma)-n(\sigma)=p(\tau)-n(\tau)$ and the degree of $f$ is well-defined.

\begin{prop}
\label{prop:degree}
If $T_P$ has no complementary edge in $\partial P$, then $S_{T^*_P}(1) \neq 0$.
\end{prop}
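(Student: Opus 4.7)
The plan is to exploit the cone structure of $T^*_P$ to express $S_{T^*_P}(1)$ as a constant multiple of the degree of a simplicial antipodal self-map of $\partial P$ induced by the labeling, and then to use the antipodal symmetry of the Tucker labeling to conclude that this degree is nonzero.

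Every maximal simplex of $T^*_P$ is a cone from $e_0$ (at the origin) to a $(d-1)$-simplex $\tau$ of $T_P|_{\partial P}$. Under $\Delta_{T^*}(1,\cdot)$ the vertex $e_0$ stays fixed and each boundary vertex $v$ is sent to $e_{\ell(v)}$, so the deformed maximal simplex is $\mathrm{cone}(0,f(\tau))$, where $f$ denotes the vertex map $v \mapsto e_{\ell(v)}$. Its signed $d$-volume equals $\tfrac{1}{d!}\det(e_{\ell(v_1)},\ldots,e_{\ell(v_d)})$.

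Under the hypothesis that $\partial P$ has no complementary edge, no two vertices of any boundary simplex $\tau$ carry antipodal labels. Hence this determinant vanishes exactly when two labels of $\tau$ coincide, and otherwise equals $\pm 1$, since the absolute values $|\ell(v_1)|,\ldots,|\ell(v_d)|$ must then exhaust $\{1,\ldots,d\}$ and $f(\tau)$ is a genuine facet $F$ of $P$. Thus $f:\partial P \to \partial P$ is a well-defined simplicial map into the coarse facet triangulation of $\partial P$, and since each $\mathrm{cone}(0,F)$ has volume $\vol(P)/2^d$, grouping the summands of $S_{T^*_P}(1)$ by target facet yields
\[
S_{T^*_P}(1) \;=\; \sum_F \bigl(p(F)-n(F)\bigr)\cdot \vol(\mathrm{cone}(0,F)) \;=\; \deg(f)\cdot\vol(P),
\]
where the signed preimage count $p(F)-n(F)$ equals $\deg(f)$ and is independent of $F$.

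It thus suffices to show $\deg(f)\ne 0$. Since the Tucker labeling satisfies $\ell(-v)=-\ell(v)$ on $\partial P$, one has $f(-v)=-f(v)$, so $f$ is an antipodal simplicial self-map of the $(d-1)$-sphere $\partial P$. The main obstacle is establishing that any such antipodal map has nonzero (indeed odd) degree without circularly invoking Borsuk--Ulam, since Tucker's lemma is its combinatorial analog. I would handle this by induction on $d$: the base case $d=1$ is immediate, as any antipodal map of $S^0$ has degree $\pm 1$. In the inductive step, one chooses an antipodally symmetric equatorial $(d-2)$-subsphere $E \subset \partial P$, observes that the preimage $f^{-1}(E)$ contains an antipodally symmetric $(d-2)$-subsphere on which $f$ restricts to an antipodal simplicial map of odd degree by the inductive hypothesis, and then transfers this parity to $\deg(f)$ by comparing the contributions on the two hemispheres separated by $E$.
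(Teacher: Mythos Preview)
Your reduction is exactly the paper's: exploit the cone structure of $T^*_P$ to get $S_{T^*_P}(1)=\deg(f)\cdot\vol(P)$ for the antipodal simplicial self-map $f:v\mapsto e_{\ell(v)}$ of $\partial P$, and then appeal to the fact that an antipode-preserving self-map of a sphere has odd degree. The paper simply cites this last fact (Lefschetz, Matou\v{s}ek) rather than proving it.

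Where you go beyond the paper is in sketching an inductive proof of the odd-degree fact, and that sketch has a real gap. The assertion that ``$f^{-1}(E)$ contains an antipodally symmetric $(d-2)$-subsphere on which $f$ restricts to an antipodal simplicial map'' is not justified and is not generally true as stated: for a simplicial map $f$, the preimage $f^{-1}(E)$ of an equatorial subcomplex is a symmetric subcomplex of the domain, but it need not be a single sphere (it can have several components, components of the wrong dimension mapped degenerately, etc.), so you cannot directly invoke the inductive hypothesis on ``the'' subsphere. The standard inductive arguments (e.g., the one in Matou\v{s}ek's book) instead work with an equator in the \emph{domain}, or carefully analyze the mod~$2$ count of $(d-1)$-simplices mapping onto a fixed target facet via a parity/chain argument across a hemisphere; either way, the passage from dimension $d-2$ to $d-1$ requires more than locating a single symmetric preimage sphere. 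If you want a self-contained proof here, you should either reproduce one of those arguments in full or, as the paper does, cite the result.
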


\begin{proof}
Let $T^\circ_P$ denote the triangulation of $P$ formed by taking facets of $P$ and coning to $e_0$ at the origin.
If there are no complementary edges in $T_{\partial P}$, the deformation $\Delta_{T^*}$ at time $t=1$ will induce a simplicial map 
$T^*_P \rightarrow T^\circ_P$
whose restriction to $\partial P$ is a simplicial map of spheres 
$f:T^*_{\partial P} \rightarrow T^\circ_{\partial P}$.

\begin{figure}[ht]
\begin{tikzpicture}[scale=0.8]
	\vertex (v1) at (2,0) {};
    \vertex (v2) at (4,-2) {};
    \vertex (v3) at (2,-4) {};
    \vertex (v4) at (0,-2) {};

    \node[label=$e_{+2}$] at (2.4,-0.2) {};
    \node[label=$e_{+1}$] at (4.5,-2.7) {};
    \node[label=$e_{-2}$] at (2.4,-4.9) {};
    \node[label=$e_{-1}$] at (-0.4,-2.7) {};    
    \node[label=$e_{0}$] at (1.7,-2.7) {};    

    \vertex (v13) at (2,-2) {};
        
 \tikzset{EdgeStyle/.style={-}}
	\Edge(v1)(v2)
    \Edge(v2)(v3)
    \Edge(v3)(v4)
    \Edge(v4)(v1)
    
    \Edge(v4)(v13)
    \Edge(v3)(v13)
    \Edge(v2)(v13)
    \Edge(v1)(v13)
    
\end{tikzpicture}
\caption{The triangulation $T^\circ_P$ of $P$.}
\label{figure:boundaryP}
\end{figure}
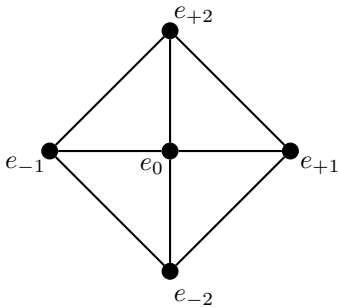

So $f$ has a well-defined degree $\deg(f)$, which is the multiplicity of the preimages of any simplex $\sigma$, counted with orientation signs.  (Our figures show examples for a $2$-dimensional cross-polytope $P$, and here the degree of $f$ on the $1$-dimensional boundary of $P$ just becomes the \emph{winding number} of $T_{\partial P}$ around $T^\circ_{\partial P}$.)

Any simplex in $T^\circ_P$ is a cone of $e_0$ over a simplex $\sigma$ in $T^\circ_{\partial P}$ and that cone appears in $S_{T^*_P}(1)$ with multiplicity $\deg(f)$.  Hence:
$$
S_{T^*_P}(1)= \deg(f) \cdot \vol(P).
$$

The Tucker labeling condition causes $f:T^*_{\partial P} \rightarrow T^\circ_{\partial P}$ to be antipode-preserving, and it is a well-established fact the degree of such maps is odd (e.g., see Lefschetz \cite{lefschetz} or Matou\v{s}ek \cite{matousek}). 
%We do not repeat that argument here.
But since an odd integer and $\vol(P)$ are both nonzero, this means that $S_{T^*_P}(1)$ is nonzero, as desired.
\end{proof}

We remark that the purpose of introducing the triangulation $T^*_P$ was to make the connection apparent between $\deg(f)$ and the volume sum $S_{T^*_P}(1)$, since the connection of $\deg(f)$ to the volume sum $S_{T_P}(1)$ is not so clear.  

Propositions \ref{prop:nonzerovolume} and \ref{prop:degree} together with the observation $S_{T_P}(1)=S_{T^*_P}(1)$
provide a proof Tucker's lemma in any dimension, as desired.

%************************************************

%\nocite{*}
\bibliographystyle{plain}
\bibliography{tucker-volume} 

\vfill

\end{document}